\documentclass{article}
\usepackage{graphicx} 

\title{A Characterization of the Cumulants\\ as Continuous Moment-Based Statistics}
\author{Sofia de la Cerda\thanks{{University of Chicago}. \texttt{sofiasl@uchicago.edu} }
}
\date{}

\usepackage[margin=1in]{geometry}
\usepackage{setspace}
\usepackage{amsfonts,amsmath,amssymb,amsthm}
\usepackage[backref=true,style=alphabetic,maxnames=4]{biblatex}
\usepackage[bookmarks=true,
    bookmarksopen=true,
    bookmarksopenlevel=3,
    bookmarksnumbered=true,
    hidelinks
]{hyperref} 

\usepackage{bbm}
\usepackage{mathtools}

\newtheorem{theorem}{Theorem}[section]

\newtheorem{proposition}[theorem]{Proposition}

\usepackage[LGR,T1]{fontenc}
\usepackage[english]{babel}
\usepackage{alphabeta}
\usepackage{csquotes}

\newcommand{\R}{\mathbb{R}}
\newcommand{\C}{\mathbb{C}}

\newcommand{\N}{\mathbb{N}}
\newcommand{\E}{\mathbb{E}}

\begin{document}

\maketitle

\begin{abstract}
Cumulants are classical statistics associated with a random variable, defined as polynomial functions of its moments and distinguished by their additivity under convolution of distributions. Statistic is the name given to a function of a random variable, and a moment-based statistic is one that depends only on the moments $(\E[X^n])_{n\in \N}$. We prove a converse: any statistic depending continuously on finitely many moments and additive for independent sums must be a linear combination of cumulants.

The proof uses an algebraic reformulation of the problem via the Hurwitz product and a linearizing change of coordinates. This result also follows from the more general theorem of Mattner \cite{mattner}, but our approach is elementary and self-contained. 
\end{abstract}

\section{Introduction}

The cumulants $(\kappa_n)_{n\in \N}$ are real functions defined on the space of random variables with finite $n$-th moment, i.e., $\kappa_n:L^n\to\R$. They are so named because they are additive in the sense that $\kappa_n(X+Y)=\kappa_n(X)+\kappa_n(Y)$ for independent random variables $X,Y$. They are defined by the relation
$$\log\left(\E\left[e^{zX}\right]\right)=\sum_{\ell=1}^n\frac{z^\ell}{\ell!}\kappa_{\ell}(X) +o(z^n)\quad\textup{as }z\to 0$$
for $z\in i\R$. Note that this does not necessarily hold for $z\in \C$. For concreteness, we note that $\kappa_1(X)=\E[X]$ coincides with the expectation, $\kappa_2(X)=\E[X^2]-\E[X]^2$ is the variance, and $\kappa_3(X)=\E[X^3]-3\E[X^2]\E[X]+2\E[X]^3$ is finally a new function, and $$\kappa_4(X)=\E[X^4]-4\E[X^3]\E[X]-3\E[X^2]^2+12\E[X^2]\E[X]^2-6\E[X]^4.$$ 

For a normal distribution, the cumulants of order $3$ and higher all vanish, a property that characterizes normality \cite{lukacs}. This vanishing property motivates thinking of higher-order cumulants as measures of deviation from normality. In particular $\kappa_3$ is a measure of asymmetry, and $\kappa_4$ is a measure of tailedness. The dilation-invariant quotient $\kappa_3/\kappa_2^{3/2}$ is known as skewness, and $\kappa_4/\kappa_2^2$ is known as kurtosis (in some texts excess kurtosis), from the Greek \textkappa\textupsilon\textrho\texttau\'\textomicron\textvarsigma \hphantom{ }meaning convex or curved. 

For additivity let $X,Y$ be independent. Then $e^{zX}, e^{zY}$ are independent, then
\begin{align*}
    \sum_{\ell=1}^n\frac{z^\ell}{\ell!}\kappa_{\ell}(X+Y)+o(z^n)&=\log\left(\E\left[e^{z(X+Y)}\right]\right)\\
    &=\log\left(\E\left[e^{zX}\right]\E\left[e^{zY}\right]\right)\\
    &=\log\left(\E\left[e^{zX}\right]\right)+\log\left(\E\left[e^{zY}\right]\right)\\
    &=\sum_{\ell=1}^n\frac{z^\ell}{\ell!}(\kappa_{\ell}(X)+\kappa_\ell(Y))+o(z^n)
\end{align*}
thus $\kappa_{\ell}(X+Y)=\kappa_{\ell}(X)+\kappa_{\ell}(Y)$ for every $\ell\in [n]=\{1, \dots, n\}$.

To relate the cumulants to the moments, recall Faà di Bruno's formula
$$\frac{d^n}{dx^n}f(g(x))=\sum_{\ell=1}^n f^{(\ell)}(g(x))B_{n,\ell}(g^\prime(x), \dots,g^{(n-\ell+1)}(x))$$
where $B_{n,\ell}$ is an incomplete Bell polynomial---defined as
\begin{align*}
    B_{n,\ell}(x_1, \dots, x_{n-\ell+1})=\sum_{\substack{j_1, \dots, j_{n-\ell+1}\in \N\\ j_1+\dots+j_{n-\ell+1}=\ell \\ j_1+2j_2+\dots+(n-\ell+1)j_{n-\ell+1}=n}}
    \frac{n!}{j_1!\dots j_{n-\ell+1}!}\left(\frac{x_1}{1!}\right)^{j_1}\left(\frac{x_2}{2!}\right)^{j_2}\dots&\left(\frac{x_{n-\ell+1}}{(n-\ell+1)!}\right)^{j_{n-\ell+1}}.
\end{align*}

An application of the formula for $f=\exp$ and $g(z)=\log(\E[e^{zX}])$ gives
\begin{align*}
    \frac{d^n}{dz^n}\Bigg\rvert_{z=0}\E[e^{zX}]&=\sum_{\ell=1}^n e^{g(0)}B_{n,\ell}(\kappa_1(X), \dots, \kappa_{n-\ell+1}(X))\\
    \E[X^n]&=\sum_{\ell=1}^n B_{n,\ell}(\kappa_1(X), \dots, \kappa_{n-\ell+1}(X)),
\end{align*}
i.e., the moments of $X$ can be expressed as polynomials in its cumulants. The polynomials $B_n=\sum_{\ell=1}^n B_{n,\ell}$ thus give the moments as functions of the cumulants, and they are called exponential Bell polynomials.

Another application of the formula for $f=\log$ and $g(z)=\E[e^{zX}]$ gives
\begin{align*}
    \frac{d^n}{dz^n} \Bigg\rvert_{z=0}\log\left(\E\left[e^{zX}\right]\right)&=\sum_{\ell=1}^n (-1)^{\ell-1} (\ell-1)!\frac{1}{g(0)^\ell}B_{n,\ell}(\E[X], \dots, \E[X^{n-\ell+1}])\\
    \kappa_n(X)&=\sum_{\ell=1}^n (-1)^{\ell-1} (\ell-1)!B_{n,\ell}(\E[X], \dots, \E[X^{n-\ell+1}]),
\end{align*}
i.e., the cumulants are expressed as polynomials in the moments via the logarithmic Bell polynomials $\overline B_n=\sum_{\ell=1}^n (-1)^{\ell-1}(\ell-1)! B_{n,\ell}$. This is a particular case of the relation of formal power series
\begin{equation}\label{logbell}
    \log\left(1+\sum_{\ell=1}^n \frac{x_\ell}{\ell!}z^\ell\right)=\sum_{\ell=1}^n\frac{\overline B_{\ell}(x_1, \dots, x_\ell)}{\ell!}z^\ell+O(z^{n+1}).
\end{equation}

It is remarkable that the function $(x_1, \dots, x_n)\mapsto (B_1(x_1), \dots, B_n(x_1,\dots, x_n))$ is (entry-wise) polynomial with a polynomial inverse $(x_1, \dots, x_n)\mapsto (\overline B_1(x_1), \dots, \overline B_n(x_1,\dots, x_n))$. 

We are now ready to state the main theorem. Formally, it is not an original result as it follows from \cite[Theorem 1.11]{mattner}, however, to the best of the author's knowledge, this specific formulation and its proof are new. The historical background is further discussed in \autoref{literature}.
\begin{theorem}\label{main}
    Let $F:L^n\to\R$ be such that there exists a continuous function $f:\R^n\to \R$ with $F(X)=f(\E[X], \dots, \E[X^n])$. Suppose that $F$ is additive, i.e., for all independent $X,Y\in L^n$, $F(X+Y)=F(X)+F(Y)$. Then $F$ is a linear combination of cumulants of order at most $n$, equivalently $f$ is (in the appropriate domain) a linear combination of logarithmic Bell polynomials.
\end{theorem}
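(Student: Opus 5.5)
Pass to cumulant coordinates. Let $g:\R^n\to\R$ be $g(k_1,\dots,k_n)=f(B_1(k_1),\dots,B_n(k_1,\dots,k_n))$, which is continuous because the map $(B_1,\dots,B_n)$ is polynomial with polynomial inverse $(\overline B_1,\dots,\overline B_n)$. Then $F(X)=g(\kappa_1(X),\dots,\kappa_n(X))$. For independent $X,Y$ the cumulants add, so additivity of $F$ reads
$$g(\kappa(X)+\kappa(Y))=g(\kappa(X))+g(\kappa(Y)).$$
If we knew that $g$ is additive on an open set, or on a set whose sums generate enough, continuity would make $g$ linear. Then $F=\sum_\ell c_\ell\kappa_\ell$, and $f=\sum_\ell c_\ell\overline B_\ell$ on the moment domain. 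So the real work is to identify the set $K=\{(\kappa_1(X),\dots,\kappa_n(X)) : X\in L^n\}$ and to show the functional equation can be promoted to all of $\R^n$.

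\textbf{Step 1: the set $K$.} $K$ is closed under addition, since independent copies realize the sum. It is also closed under the scaling $(k_\ell)\mapsto(t^\ell k_\ell)$ via $X\mapsto tX$. I claim $K$ has nonempty interior. To see this, take $X$ with finitely many atoms at distinct points and with $n+1$ or more atoms. Its moment vector $(\E[X],\dots,\E[X^n])$ is then an interior point of the moment cone: perturbing the weights and locations gives a submersion onto $\R^n$ near that point, by a Vandermonde-type Jacobian. Since the inverse Bell map is a diffeomorphism, $K$ contains an open ball $U$.

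\textbf{Step 2: extend $g$ additively.} Cauchy's equation holds on $K$ and $K+K\subseteq K$, so $K$ is an additive semigroup with interior. Define $G$ on the group $K-K$ by $G(a-b)=g(a)-g(b)$; this is well defined because $a-b=a'-b'$ gives $a+b'=a'+b$, hence $g(a)+g(b')=g(a')+g(b)$. Since $K$ contains a ball $U$, the set $K-K\supseteq U-U$ contains a neighborhood of $0$, and being a group it equals $\R^n$. Thus $G:\R^n\to\R$ is additive.

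\textbf{Step 3: continuity and conclusion.} $G$ is continuous near $0$ as $u\mapsto g(u)-g(u_0)$ on $U-u_0$, where $g$ is continuous. An additive function continuous at one point is linear, so $G(k)=\sum c_\ell k_\ell$. Also $g=G$ on $K$: indeed $g(a)=g(a+b)-g(b)=G(a)$, using a fixed $b\in K$. This gives the result. Degenerate cases should also be noted: the point $0=\kappa(0)$ lies in $K$, so $g(0)=0$ follows by additivity with $Y=0$. One also needs the statement's domain to be only the moment set, which is exactly what we used.

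\textbf{Main obstacle.} The main obstacle is Step 1: nonempty interior of the moment set, together with the transfer through the Bell change of coordinates. I would handle it with an explicit finitely supported distribution. Take $n$ movable atoms $x_1,\dots,x_n$ with weights fixed at $1/(n+1)$ each, plus one extra balancing atom. The Jacobian of $(x_i)\mapsto(\sum_i w_ix_i^\ell)_{\ell\le n}$ is $\prod_i w_i\cdot\ell x_i^{\ell-1}$ in Vandermonde form, which is nonsingular for distinct nonzero $x_i$. The inverse function theorem then gives an open set of moment vectors, and hence, via the polynomial diffeomorphism, an open set of cumulant vectors.
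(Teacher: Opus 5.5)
Your argument is correct. It shares the paper's skeleton: nonempty interior of the moment set, Cauchy's equation on an additive semigroup with interior extended to $K-K=\R^n$, and continuity forcing linearity. It differs from the paper in both substantive ingredients.

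\textbf{Linearization.} You use the cumulant coordinates $\overline B=(\overline B_1,\dots,\overline B_n)$ directly. These turn the moment-level operation for independent sums (the Hurwitz product) into ordinary addition simply because cumulants are additive. The paper instead builds the auxiliary isomorphism $\varphi$ from $\prod_k(1+z^k)^{x_k}$ and proves it intertwines $+$ with the Hurwitz product. It then has to compute that each $\overline B_\ell\circ\varphi$ is a linear form and that these forms span the dual. That computation shows $\overline B\circ\varphi=L$ is a linear isomorphism, so $\varphi=B\circ L$ is just the Bell map up to a linear change of variables. Your route therefore removes the detour and makes the final spanning step trivial. What the paper's route buys is that the cumulants emerge as a basis of the linearized additive functionals, rather than being presupposed as the coordinates.

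\textbf{Interior of the moment set.} You use finitely supported laws and the inverse function theorem. The Jacobian determinant is $n!\prod_i w_i\prod_{i<j}(x_j-x_i)$, so distinct atoms suffice and your ``nonzero'' condition is unnecessary. This is elementary and self-contained. The paper instead appeals to the truncated Hamburger moment theorem, Sylvester's criterion, and the Hankel determinant at $(\ell!)_{\ell}$.
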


Requiring that $F$ depend only on the moments is not that significant a restriction, since under mild conditions a distribution is uniquely determined by its moments. Moreover, the hypotheses of the theorem are weak---continuity and additivity---and its consequences are strong---polynomial and even more rigidly spanned by a finite family given explicitly. 

The fundamental observation in the proof is that addition of independent random variables, at the level of moments, is an algebraic operation (the Hurwitz product) involving binomial expansions. After encoding additivity in these terms, we find a system of coordinates that transforms this product into ordinary addition in $\R^n$. The problem then reduces to solving a Cauchy functional equation under hypotheses that imply linearity. The cumulants appear as a basis associated with this linearization.

Before proceeding, define the set of moment sequences 
$$\Omega_n=\big\{(\E[X], \dots, \E[X^n]):X\in L^n\big\}$$
so we can outline the proof:
\begin{enumerate}
    \item Prove that $\Omega_n$ has non-empty interior;
    \item Define the Hurwitz product $*$ on $\R^n$ and observe that the additivity hypothesis of the main theorem is equivalent to $f(x*y)=f(x)+f(y)$;
    \item The Hurwitz product gives $\R^n$ the structure of an abelian group which is isomorphic to the usual $\R^n$ via the function $\varphi$, i.e., $\varphi(x+y)=\varphi(x)*\varphi(y)$;
    \item $f\circ\varphi$ extends to a linear functional on $\R^n$;
    \item The compositions $\overline B_n\circ \varphi$ span the space of linear functionals. 
\end{enumerate}

\section{Previous Work}\label{literature}

The cumulants were introduced by Thiele, and the reader can find a translated version of his paper in \cite{thiele}. Rota \cite{Rota2001} also credits him with the following characterization of the cumulants:

\begin{quote}
    Sometime in the last [19th] century, the Danish statistician Thiele observed that the variance of a random variable, namely $\text{Var}(X)=\mathbb E[X^2]-\mathbb E[X]^2$, possesses notable properties, to wit:
    \begin{enumerate}
        \item It is invariant under translation: $\text{Var}(X+c)=\text{Var}(X)$ for any constant $c$.
        \item If $X$ and $Y$ are independent random variables, then $\text{Var}(X + Y) = \text{Var}(X) + \text{Var}(Y)$.
        \item $\text{Var}(X)$ is a polynomial in the moments of the random variable $X$.
    \end{enumerate}
    
    He then proceeded to determine all nonlinear functionals of a random variable which have the same properties.
\end{quote}

Thiele's works predate modern notation, and we could not find a modern statement of the polynomial characterization in the literature. His characterization (as described by Rota) is similar to ours, with the main difference being that he considers only functions of the form $f(\E[X], \dots, \E[X^n])$ for polynomial $f$, whereas we considered any continuous $f$.

Another far-reaching characterization was found by Mattner \cite{mattner}. He gives the space of probability measures with all moments finite a topology, and proves that functions that are additive and continuous with respect to that topology are linear combinations of cumulants. He chooses a strong topology, so continuity is a weak assumption on $f$, and in fact our result can be obtained as a particular case of his. His proof uses functional and Fourier analytic techniques. The present work may thus be viewed as an intermediate result between Thiele's and Mattner's characterizations.

\section{The Hamburger Moment Problem}

The Hamburger moment problem is: which sequences $x=(x_\ell)_{\ell\in [n]}$ are moment sequences, in the sense that there exists a random variable $X$ such that $\E[X^\ell]=x_\ell\:\forall\:\ell\in [n]$, equivalently $x\in\Omega_n$? Is this random variable unique up to distribution?

The first question has a rather simple answer, and the second one is beyond the scope of this paper, but very interesting. Its answer is negative in general, but positive under mild restrictions (e.g., Carleman's condition) under which the corresponding random variables are characterized by their moments. In this context, the hypothesis of the main theorem that $F$ may only depend on the moments appears less restrictive than it does at first glance.

\begin{theorem}[{\cite[Theorem 9.27 and Theorem 9.32]{momentproblem}}]
    A sequence $(x_\ell)_{\ell\in [n]}$ is a moment sequence if and only if it can be extended to $(x_\ell)_{\ell\in [m]}$ where $m=2\lfloor n/2\rfloor+2$ such that
    the Hankel matrix associated with it
    $$\begin{bmatrix}
        1 & x_1 & x_2 &\cdots  & x_{m/2}\\
        x_1 & x_2 & \cdots & \cdots & x_{m/2+1}\\
        x_2 & \vdots & \ddots & &\vdots\\
        \vdots & \vdots & & \ddots & \vdots \\
        x_{m/2} & x_{m/2+1} & \cdots & \cdots & x_{m}
    \end{bmatrix}$$
    is positive semidefinite. That is, if $n$ is odd there exists $x_{n+1}\in\R$ or, if $n$ is even there exist $x_{n+1},x_{n+2}\in\R$ such that the above matrix is positive semidefinite.
\end{theorem}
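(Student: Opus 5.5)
We would prove the two directions separately, working throughout with the moment curve $\gamma(t)=(t,t^2,\dots,t^n)$. The set $\Omega_n$ consists of the vectors $\E[\gamma(X)]$ for $X\in L^n$. Write $m=2k$ with $k=\lfloor n/2\rfloor+1$; then $n\le 2k-1$. This parity bookkeeping is what makes the free entries $x_{n+1},\dots,x_m$ available.

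\textbf{Necessity.} Let $x=\E[\gamma(X)]$. The obstacle is that $X$ need not have moments beyond order $n$, so the extension must be manufactured. We first show that $x$ lies in the convex hull $C$ of the curve $\gamma(\R)$ itself, not merely in its closure.
\begin{enumerate}
    \item The point $x$ lies in $\overline C$, because it is an average of points of $\gamma(\R)$.
    \item If $x\in\operatorname{int}\overline C$, then $x\in\operatorname{int} C\subset C$, since a convex set and its closure have the same interior.
    \item If $x\in\partial\overline C$, take a supporting hyperplane at $x$. It gives a nonzero polynomial $p$ of degree at most $n$ with $p\ge 0$ on $\R$ and $\E[p(X)]=0$.
    \item Then $X$ is supported on the finite zero set of $p$, so $x\in C$ directly.
\end{enumerate}
By Carathéodory, $x$ is then a convex combination of finitely many points $\gamma(t_i)$. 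Hence $x$ is the moment vector of a finitely supported random variable $Y$, which has all moments. We take $x_\ell=\E[Y^\ell]$ for $n<\ell\le m$. The Hankel matrix is then the Gram matrix $\E[vv^T]$ with $v=(1,Y,\dots,Y^k)^T$, so it is positive semidefinite.

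\textbf{Sufficiency.} Suppose the extended Hankel matrix $H_k$, of size $k+1$, is positive semidefinite. Since only $x_1,\dots,x_{2k-1}\supseteq x_1,\dots,x_n$ must be matched, we may freely modify the corner entry $x_{2k}$. We argue by induction on $k$ and split into two cases.
\begin{enumerate}
    \item \emph{The leading block $H_{k-1}$ is positive definite.} Lower $x_{2k}$ to the Schur-complement value, which makes $H_k$ singular of rank $k$. Use the positive definite form $\langle p,q\rangle=L(pq)$ on polynomials of degree $<k$, where $L(t^\ell)=x_\ell$. Build the degree-$k$ polynomial $P$ spanning the kernel of $H_k$. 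Gaussian quadrature then applies: $P$ has $k$ real simple zeros $t_1,\dots,t_k$, and positive weights $w_i$ reproduce $L$ on all polynomials of degree $\le 2k-1$. Thus $\sum_i w_i\delta_{t_i}$ is a representing probability measure.
    \item \emph{The leading block is singular.} A null vector of $H_{k-1}$ gives a polynomial $q$ of degree $j<k$ with $L(q^2)=0$. Positive semidefiniteness then forces $L(qr)=0$ for every $r$ of degree $\le k$.
\end{enumerate}

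The second case is the step we expect to be the main obstacle. Truncated positive semidefinite Hankel matrices need not be representable in general; the standard counterexamples are singular ones that are not "recursively generated". The argument must therefore use that only moments up to $2k-1$ are required, so the problematic last entry can be discarded.

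Our plan in that case is as follows. Take $q$ of minimal degree $j$. We first try to show that $L$ restricted to degree $\le 2k-1$ is determined by $x_0,\dots,x_{2j-1}$ through the recursion $L(t^s q)=0$. This would reduce to the positive definite case for $H_{j-1}$ at smaller size, and the Gaussian quadrature built on the roots of $q$ would then represent all required moments. The delicate point is checking that the recursion $L(t^sq)=0$ reaches degrees up to $2k-1$ but not $2k$. This is exactly where the choice of $m=2\lfloor n/2\rfloor+2$, rather than $m=n$, is needed.
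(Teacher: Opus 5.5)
Your necessity direction is correct, and Case 1 of sufficiency is the standard Gaussian-quadrature argument. The real-rootedness of $P$ uses only two facts: $L$ is positive definite on squares of degree $\le k-1$, and nonnegative univariate polynomials are sums of squares. (The paper does not prove this statement; it only cites it, so I am judging your argument on its own.)

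The gap is Case 2, which you leave as a plan at exactly the point where the content lies. From $H_k\succeq 0$ and $L(q^2)=0$ you only get $L(t^iq)=0$ for $0\le i\le k$. That gives vanishing on multiples of $q$ only up to degree $j+k$. Reducing to the roots of $q$ requires $L(qh)=0$ whenever $\deg(qh)\le 2k-1$, i.e.\ for all $\deg h\le 2k-1-j$. Since $j\le k-1$, this is already covered only when $j=k-1$. For $j\le k-2$ nothing in the proposal establishes it, so sufficiency is not yet proved.

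The missing step is a short induction showing $t^sq\in\ker H_k$ for $0\le s\le k-j-1$. Suppose $t^{s-1}q\in\ker H_k$ and $s\le k-j-1$. Then
\[
L\big((t^sq)^2\big)=L\big(t^{s-1}q\cdot t^{s+1}q\big)=0,
\]
because $\deg(t^{s+1}q)=j+s+1\le k$. Since $H_k\succeq 0$, any $v$ with $v^TH_kv=0$ lies in $\ker H_k$, so $t^sq\in\ker H_k$. Pairing these kernel vectors with $1,t,\dots,t^k$ gives $L(t^aq)=0$ for all $a\le 2k-1-j$. The induction stops there because the next step would involve $t^{k-j+1}q$, of degree $k+1$; this is exactly why $x_{2k}$ stays unconstrained.

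Minimality of $j$ gives $H_{j-1}\succ 0$ and $q\in\ker H_j$. Your Case 1 at size $j$ then produces the simple real roots $t_1,\dots,t_j$ of $q$ and weights $w_i>0$ with $L(r)=\sum_i w_ir(t_i)$ for $\deg r\le 2j-1$. Any $g$ of degree $\le 2k-1$ can be written $g=qh+r$ with $\deg r\le j-1$, which yields $L(g)=L(r)=\sum_i w_ig(t_i)$.

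Alternatively, you can avoid Case 2 altogether by perturbing into Case 1:
\begin{enumerate}
\item Replace $x$ (including $x_{2k}$) by $(1-\varepsilon)x+\varepsilon y$, where $y$ is the moment sequence through order $2k$ of any infinitely supported distribution, e.g.\ the standard exponential. Then $H_k\succ 0$.
\item Case 1 gives $k$-atomic probability measures $\mu_\varepsilon$ matching moments through order $2k-1$, with $\int t^{2k}\,d\mu_\varepsilon\le(1-\varepsilon)x_{2k}+\varepsilon y_{2k}$.
\item This uniform bound yields tightness and uniform integrability of $t^\ell$ for $\ell\le 2k-1$, so any weak limit represents $x_1,\dots,x_{2k-1}$.
\end{enumerate}
That route also makes transparent that a finite $x_{2k}$ is exactly what the extension hypothesis buys.
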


For any moment sequence, the determinant of the second leading minor is $x_2-x_1^2=\textup{Var}(X)$, so we must have $\textup{Var}(X)\geq 0$. For a length-two sequence $(x_1, x_2)$, this is also sufficient as it can be achieved by, e.g., a normal distribution $\mathcal N(x_1, x_2-x_1^2)$, and the corresponding choice $x_3=3x_1x_2-2x_1^3$ and $x_4=3x_2^2-2x_1^4$ makes the Hankel matrix have determinant $2(x_2-x_1^2)^3\geq0$, so by Sylvester's criterion it is positive semidefinite.

Knowing this theorem, we can prove the main result of this section.

\begin{proposition}
    The set $\Omega_n$ contains a neighborhood of $x=(\ell!)_{\ell\in [n]}$.
\end{proposition}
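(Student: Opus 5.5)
The point $x=(\ell!)_{\ell\in[n]}$ is the moment sequence of the exponential distribution with rate $1$, since $\E[X^\ell]=\ell!$ for $X\sim\mathrm{Exp}(1)$. The plan is to extend $x$ by the remaining exponential moments, $x_{n+1}=(n+1)!$ and (if $n$ is even) $x_{n+2}=(n+2)!$. This gives the sequence $(\ell!)_{\ell\in[m]}$ with $m=2\lfloor n/2\rfloor+2$. We then show its Hankel matrix $H=(x_{i+j})_{0\le i,j\le m/2}$, with $x_0=1$, is \emph{positive definite}, not merely semidefinite. Since positive definiteness is an open condition, it survives small perturbations of the first $n$ entries while the extension is kept fixed. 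The moment problem theorem quoted above then places every nearby sequence in $\Omega_n$.

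First we check positive definiteness. For any nonzero $c=(c_0,\dots,c_{m/2})\in\R^{m/2+1}$,
$$c^{T}Hc=\sum_{i,j}c_ic_j\E[X^{i+j}]=\E\Big[\Big(\sum_i c_iX^i\Big)^2\Big]=\int_0^\infty p(t)^2e^{-t}\,dt,$$
where $p(t)=\sum_i c_it^i$ is a nonzero polynomial. This integral is strictly positive because $p$ has finitely many zeros and $e^{-t}>0$ on a set of positive measure. Hence $H\succ 0$. The key fact is that the exponential law has infinite support, so no nonzero polynomial vanishes almost surely.

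Next we use openness. The map $y\mapsto H(y)$, which sends $y\in\R^n$ to the Hankel matrix of $(y_1,\dots,y_n,(n+1)!,\dots,m!)$, is affine and hence continuous. The set of positive definite symmetric matrices is open; one way to see this is that by Sylvester's criterion it is defined by finitely many strict inequalities on leading minors, and these minors are polynomial in the entries. Therefore there is a neighborhood $U$ of $x$ such that $H(y)\succ 0$, and in particular $H(y)\succeq 0$, for all $y\in U$. By the quoted theorem, each such $y$ admits the required extension, so $y\in\Omega_n$. Thus $U\subseteq\Omega_n$.

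The only potential obstacle is the strict positivity of $H$. A merely semidefinite Hankel matrix would not give an open set, which is why we choose a distribution with infinite support. The quadratic form computation above settles this directly, so no serious difficulty is expected. One should also confirm that the theorem's indexing is respected: when $n$ is odd, $m=n+1$ and one extra entry is fixed; when $n$ is even, $m=n+2$ and two extra entries are fixed. In both cases the perturbation touches only $y_1,\dots,y_n$.
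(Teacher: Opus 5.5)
Your proof is correct and has the same skeleton as the paper's. Both extend $(\ell!)_{\ell\in[n]}$ by further exponential moments, show the resulting Hankel matrix is positive definite, use that positive definiteness is an open condition, and invoke the quoted moment-problem theorem.

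The real difference is how you get positive definiteness. The paper cites Junod's closed-form value $\prod_{\ell=1}^k \ell!^2$ for the leading minors and then applies Sylvester's criterion. You instead observe that $c^{T}Hc=\E[p(X)^2]=\int_0^\infty p(t)^2e^{-t}\,dt>0$ for $X\sim\mathrm{Exp}(1)$. Your argument is fully self-contained, which fits the paper's stated aim of an elementary proof. It is also more general: the moment vector of any law with infinite support and finite $m$-th moment lies in the interior of $\Omega_n$, so the exponential distribution is not essential. The paper's route gives explicit values of the minors in exchange, though nothing later uses them.

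A smaller difference: the paper perturbs all $m$ entries and then projects $\Omega_m$ onto the first $n$ coordinates. You freeze the extension entries $(n+1)!,\dots,m!$ and perturb only $y_1,\dots,y_n$. The two are equivalent, and yours is slightly more direct.
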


\begin{proof}
    To apply Sylvester's criterion, compute the determinant of the leading minors 
    $$\det\begin{bmatrix}
        1 & 1! & \cdots & k!\\
        1! & 2! & \cdots & (k+1)!\\
        \vdots & \vdots & \ddots & \vdots \\
        k! & (k+1)! & \cdots & (2k)!
    \end{bmatrix}=\prod_{\ell=1}^k \ell!^2 $$
    \cite[Example 2]{JUNOD}. These determinants (for $k=1,\dots, \lfloor n/2\rfloor+1$) are all strictly positive, so by continuity of $\det$ there is a neighborhood of $x$ where they are all positive. For any element of this neighborhood, we can apply Sylvester's criterion to prove that the associated Hankel matrix is positive semidefinite, thus there exists a random variable with the corresponding moments, thus it is an element of $\Omega_m\subset\R^m$. The projection of $\Omega_m$ to the first $n$ coordinates then contains an open neighborhood of $x$.
\end{proof}

This vector $(\ell!)_{\ell\in [n]}$ is a natural choice as the moment sequence of a standard exponential distribution.

\section{The Hurwitz Product}

In the previous section, we discussed the map that takes a random variable to its moment sequence. We now define the Hurwitz product $*$, which corresponds to the sum of independent random variables under this map. More precisely, let $x=(\E[X^\ell])_{\ell \in [n]}$ and $y=(\E[Y^\ell])_{\ell \in [n]}$, then $x*y$ is the moment sequence of $X+Y$.

We might think of the Cauchy product of two sequences $\left(\sum_k x_{k}y_{\ell-k}\right)_{\ell\in [n]}$ as the operation that gives the coefficients of the product of two generating functions $\sum_\ell x_\ell z^\ell\sum_\ell y_\ell z^\ell$. We are taking $x=(x_1, \dots, x_n)$ and $x_0=1$ for convenience, and similarly for all other sequences. Likewise, the Hurwitz product 
$$x*y=\left(\sum_{k=0}^\ell \binom{\ell}{k}x_ky_{\ell-k}\right)_{\ell\in [n]}$$
gives the sequence of coefficients of the product of exponential generating functions
\begin{equation}\label{prod}
    \sum_{\ell=0}^n \frac{x_\ell}{\ell!}z^\ell\sum_{\ell=0}^n \frac{y_\ell}{\ell!}z^\ell=\sum_{\ell=0}^n \frac{(x*y)_\ell}{\ell!}z^\ell+O(z^{n+1}).
\end{equation}
For example,
$$(x_1, x_2, x_3)*(y_1, y_2, y_3)=\left(x_1+y_1, x_2+2x_1y_1+y_2, x_3+3x_2y_1+3x_1y_2+y_3\right).$$
This operation is commutative despite its name. 

Recall the additivity hypothesis of the main theorem: $F(X+Y)=F(X)+F(Y)$. It is now clear that it is equivalent to $f(x*y)=f(x)+f(y)$ for all $x,y\in\Omega_n$.

In fact, the Hurwitz product gives $\R^n$ the structure of an abelian Lie group, but we will not need that fact. What we will need is that $\Omega_n$ is a sub-semigroup of $(\R^n, *)$, which is easy to verify since for $x,y\in\Omega_n$ we can find random variables $X,Y$ (which can be taken to be independent)  with moment sequences $x,y$ then $X+Y$ is a random variable whose moments are $x*y$ which is an element of $\Omega_n$.

\section{Linearization}

William Stein said that mathematics is the art of reducing any problem to linear algebra, and this paper is no exception. We now define a function $\varphi$ such that $f\circ\varphi$ is linear.
\begin{align*}
    \varphi:\R^n&\to \R^n\\
    x&\mapsto u\quad \textup{where } \prod_{k=1}^n (1+z^k)^{x_k}=1+\sum_{\ell=1}^n \frac{u_\ell}{\ell!}z^\ell.
\end{align*}
We omit terms of order $O(z^{n+1})$. 

\begin{proposition}
    The function $\varphi$ is a group isomorphism to $(\R^n, *)$ from the usual $\R^n$.
\end{proposition}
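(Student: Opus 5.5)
The plan is to prove that $\varphi$ is a homomorphism using the generating-function description (\ref{prod}) of $*$, and then that it is bijective by a triangular (inductive) argument.

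\textbf{Homomorphism.} Work in the truncated power series ring $\R[z]/(z^{n+1})$. The expression $(1+z^k)^{x_k}$ means the binomial series $\sum_{j\ge 0}\binom{x_k}{j}z^{kj}$ truncated at order $n$. Equivalently, it is $\exp(x_k\log(1+z^k))$, where $\exp$ and $\log$ are the usual formal series on series with zero constant term; this makes sense for real $x_k$. From this description, $(1+z^k)^{a}(1+z^k)^{b}=(1+z^k)^{a+b}$ as truncated series. Hence
$$\prod_k(1+z^k)^{x_k+y_k}=\prod_k(1+z^k)^{x_k}\prod_k(1+z^k)^{y_k}.$$
By (\ref{prod}), the coefficients of a product of two truncated exponential generating functions with constant term $1$ are exactly the Hurwitz product of their coefficient sequences. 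Therefore $\varphi(x+y)=\varphi(x)*\varphi(y)$. We also get $\varphi(0)=0$, which is the identity of $*$ (the sequence of the series $1$).

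\textbf{Bijectivity.} The key observation is triangularity. Only factors with $k\le \ell$ contribute to the coefficient of $z^\ell$. The factor $(1+z^\ell)^{x_\ell}$ contributes $x_\ell z^\ell+O(z^{2\ell})$. So
$$\frac{u_\ell}{\ell!}=x_\ell+P_\ell(x_1,\dots,x_{\ell-1}),$$
where $P_\ell$ is a polynomial, since the binomial coefficients $\binom{x_k}{j}$ are polynomial in $x_k$. Given $u\in\R^n$, solve recursively:
$$x_\ell=\frac{u_\ell}{\ell!}-P_\ell(x_1,\dots,x_{\ell-1}).$$
This determines a unique $x$, so $\varphi$ is a bijection, with polynomial inverse. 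A bijective homomorphism is an isomorphism.

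As a byproduct, since $\varphi$ is a bijective homomorphism from the group $(\R^n,+)$, the set $(\R^n,*)$ is an abelian group. Commutativity and associativity also follow directly from the power series description.

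\textbf{Main obstacle.} The only delicate point is making sense of $(1+z^k)^{x_k}$ for non-integer real $x_k$ and verifying the exponent law in the truncated ring. I would handle this through the formal $\exp/\log$ definition. Then the exponent law follows from $\exp(A+B)=\exp(A)\exp(B)$ for commuting nilpotent elements of $\R[z]/(z^{n+1})$, which holds because these sums are finite.
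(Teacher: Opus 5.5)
Your proof is correct and takes essentially the same route as the paper. The homomorphism property comes from multiplying the truncated generating functions and invoking (\ref{prod}), and bijectivity comes from the triangular structure $\varphi(x)_\ell=\ell!\,x_\ell+(\text{terms in }x_1,\dots,x_{\ell-1})$. The differences are minor: the paper proves injectivity separately, by showing that $\prod_k(1+z^k)^{x_k-y_k}=1+O(z^{n+1})$ forces $x=y$, whereas you read both injectivity and surjectivity off the uniqueness of the recursive solution; you also make explicit the formal meaning of $(1+z^k)^{x_k}$ for real exponents and its exponent law, which the paper leaves implicit.
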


\begin{proof}
    Indeed, let $x,y\in\R^n$ and $\varphi(x)=u$ and $\varphi(y)=v$, then
\begin{align*}
    1+\sum_{\ell=1}^n \frac{\varphi(x+y)_\ell}{\ell!}z^\ell&=\prod_{k=1}^n(1+z^k)^{x_k+y_k}=\prod_{k=1}^n(1+z^k)^{x_k}\prod_{k=1}^n(1+z^k)^{y_k}\\
    &=\left(1+\sum_{\ell=1}^n \frac{u_\ell}{\ell!}z^\ell\right)\left(1+\sum_{\ell=1}^n \frac{v_\ell}{\ell!}z^\ell\right)
\end{align*}
by \autoref{prod}, $\varphi(x+y)=\varphi(x)*\varphi(y)$.

Furthermore if $\varphi(x)=\varphi(y)$ we have
\begin{align*}
    \prod_{k=1}^n (1+z^k)^{x_k}&=\prod_{k=1}^n (1+z^k)^{y_k}\\
    \prod_{k=1}^n (1+z^k)^{x_k-y_k}&=1+O(z^{n+1})
\end{align*}
from which we can extract the $z$ coefficient, implying $x_1=y_1$ then the $z^2$ coefficient implies $x_2=y_2$, etc. Inductively $x=y$, i.e., $\varphi$ is injective.

Finally, observe that 
$$\varphi(x)=\left(x_1, 2!\binom{x_1}{2}+2x_2, 3!\binom{x_1}{3}+6x_1x_2+6x_3, \dots\right)$$
in particular the $\ell$-th coordinate $\varphi(x)_\ell$ is something that only depends on $x_1, \dots, x_{\ell-1}$ plus $\ell!x_\ell$. So for any $u\in\R^n$ we can find its preimage by recursion on the coordinates: $x_1=u_1$, and $x_2=\frac 12 \left(u_2-2\binom{x_1}{2}\right)$, $x_3=\frac{1}{3!}(u_3-\dots)$, etc. That is, $\varphi$ is surjective. 
\end{proof}

The reason why we need this isomorphism is to state this double equality which is key to the entire proof
$$f\circ\varphi(x+y)=f(\varphi(x)*\varphi(y))=f\circ\varphi(x)+f\circ\varphi(y)\quad\textup{for all }x,y\in \varphi^{-1}(\Omega_n)!$$
In words: $f\circ\varphi$ is additive. Now, since $\varphi$ is continuous, the inverse image of a ball $\varphi^{-1}(B)$ is open, and since it is surjective, it is not empty so a standard result from algebra proves linearity. The additivity condition $g(x+y)=g(x)+g(y)$ is equivalent to the Cauchy functional equation, which under these conditions has a unique linear solution.

\begin{proposition}
    Let $S\subset \R^n$ be  a sub-semigroup with non-empty interior and $g:S\to \mathbb R$ be a continuous additive function, i.e., $g(x+y)=g(x)+g(y)$ for all $x,y\in S$. Then $g$ extends to a unique linear functional on $\R^n$.
\end{proposition}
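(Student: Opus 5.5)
The strategy is to extend $g$ from $S$ to the group $S-S$ it generates, show that this group is all of $\R^n$, and then apply the standard fact that a continuous solution of the Cauchy functional equation on $\R^n$ is linear.

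\emph{Extension.} Define $G(a-b)=g(a)-g(b)$ for $a,b\in S$. This is well defined: if $a-b=c-d$, then $a+d=b+c$ with both sides in $S$, and additivity gives $g(a)+g(d)=g(b)+g(c)$. The same manipulation shows $G$ is additive on $S-S$, and $G$ agrees with $g$ on $S$, since for $a\in S$ we have $2a\in S$ and $G(a)=g(2a)-g(a)=g(a)$.

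\emph{$S-S=\R^n$.} Let $B(p,r)\subset S$ be an open ball. Then $B(p,r)-B(p,r)\supseteq B(0,r)$, because $v=(p+v/2)-(p-v/2)$ whenever $|v|<r$. Since $S-S$ is a group containing a neighborhood of $0$, it contains every $v\in\R^n$: write $v=N\cdot(v/N)$ with $N$ large enough that $v/N\in B(0,r)$.

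\emph{Linearity.} The main obstacle is that continuity is only given on $S$, not on all of $\R^n$. I would handle it directly. For $|v|<r$,
$$G(v)=g(p+v/2)-g(p-v/2),$$
and the right-hand side is continuous in $v$ by continuity of $g$ on the ball. So $G$ is continuous on $B(0,r)$. Additivity then transfers this to every point: $G(x+h)-G(x)=G(h)$, and $G(h)\to G(0)=0$ as $h\to 0$, so $G$ is continuous on all of $\R^n$. Next, additivity gives $G(qx)=qG(x)$ for every rational $q$, and continuity upgrades this to real scalars. Hence $G$ is linear; explicitly, $G(x)=\sum_i x_iG(e_i)$.

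\emph{Uniqueness.} Any linear functional extending $g$ must agree with $g$ on the open ball $B(p,r)$. Since $B(p,r)-B(p,r)\supseteq B(0,r)$, two such functionals agree on a neighborhood of $0$, and therefore agree everywhere.
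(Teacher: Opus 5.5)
Your proof is correct and follows the same route as the paper: you extend $g$ to the group $S-S$ by $G(a-b)=g(a)-g(b)$, use an interior ball to get $S-S=\R^n$, and then apply the Cauchy functional equation. Your explicit argument that $G$ is continuous, via $G(v)=g(p+v/2)-g(p-v/2)$ on $B(0,r)$ and translation by additivity, supplies a step the paper only asserts when it calls $\overline g$ continuous on $\R^n$.
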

\begin{proof}
    Define $\overline{g}:S-S\to \R$ by $\overline g(x-y)=g(x)-g(y)$, where $S-S=\{x-y:x,y\in S\}$. To verify that this is well-defined, consider two representatives of the same point $x_1-y_1=x_2-y_2$ then
    \begin{align*}
        x_1+y_2&=x_2+y_1\\
        g(x_1+y_2)&=g(x_2+y_1)\\
        g(x_1)+g(y_2)&=g(x_2)+g(y_1)\\
        g(x_1)-g(y_1)&=g(x_2)-g(y_2).
    \end{align*}
    Furthermore, for any point $p\in S-S$ write it as $p=x-y$ thus $p+y=x$ which implies that any additive extension $g'$ in $S-S$ satisfies $g'(p+y)=g'(x)$ thus $g'(p)=g'(x)-g'(y)=g(x)-g(y)=\overline g(p)$, that is, $\overline g$ is the unique extension of $g$ to an additive function in $S-S$.
    Since $S$ contains an open ball $B_\epsilon(v)$, $S-S$ contains $B_\epsilon(v)-B_\epsilon(v)=B_{2\epsilon}(0)$, thus $S-S$ is a subgroup of $\R^n$ that contains a neighborhood of the origin. For any $x\in\R^n$ we can write $x=\frac{x}{m}+\dots+\frac{x}{m}$ where $m$ is large enough that $\frac{x}{m}\in S-S$, thus $S-S=\R^n$. Since $\overline g$ is continuous and additive on $\R^n$ an application of the standard Cauchy functional equation completes the proof.
\end{proof}

From now on, we will consider $f\circ\varphi$ to be its linear extension to $\R^n$. We must now prove that the functions $\overline{B}_\ell\circ\varphi$ are linear and find their closed form. Let $u=\varphi(x)$ and compute

\begin{align*}
        1+\sum_{\ell=1}^n \frac{u_\ell}{\ell!}z^\ell&=\prod_{k=1}^n(1+z^k)^{x_k}+O(z^{n+1})\\
        \log\left(1+\sum_{\ell=1}^n \frac{u_\ell}{\ell!}z^\ell\right)&=\sum_{k=1}^n x_k\log(1+z^k)+O(z^{n+1})\\
        \sum_{\ell=1}^n \frac{\overline B_\ell(u)}{\ell!}z^\ell&=\sum_{k=1}^n x_k\log(1+z^k)+O(z^{n+1}).\\
\end{align*}
Wherein we used the definition of $\varphi$, took a logarithm, and applied \autoref{logbell}. Now we use the power series for the logarithm
$$[z^n]\log(1+z^d)=\begin{cases}
    \frac{(-1)^{n/d+1}}{n/d} & d\mid n\\
    0 & d\nmid n.
\end{cases}$$
Where prepending $[z^k]$ to a power series denotes the operation that gives its $z^k$ coefficient. Finally extract the $z^n$ coefficient,
\begin{align}
        \overline B_n(u)=\overline B_n\circ\varphi(x)&=n![z^n]\sum_{k=1}^n x_k\log(1+z^k)\\
        &=n!\sum_{d\mid n}x_d[z^n]\log(1+z^d)\\
        &=(n-1)!\sum_{d\mid n}(-1)^{n/d+1}d x_d.\label{cumulantlinear}
\end{align}

The attentive reader will recognize the right-hand side as a Dirichlet convolution, a remark we will make no use of. 

\section{Proof of Theorem \ref{main}}

\begin{proof}
    Consider the stacking $\left(\overline B_1\circ\varphi, \dots, \overline B_n\circ\varphi\right)$ which by \autoref{cumulantlinear} is represented by an $n\times n$ triangular matrix with non-vanishing diagonal entries, and in particular is invertible. Therefore $\left(\overline B_\ell\circ\varphi\right)_{\ell\in [n]}$ is linearly independent, so their span is the entire space of linear functionals. Hence, there exist constants $c_1, \dots, c_n$ such that  
    $$f\circ\varphi=\sum_{\ell=1}^n c_\ell\cdot \overline{B}_\ell\circ\varphi.$$
    Now apply $\varphi^{-1}$ on the right to obtain
    $$f=\sum_{\ell=1}^n c_\ell\cdot \overline{B}_\ell,$$
    i.e., $F=\sum_\ell c_\ell \kappa_\ell$.
\end{proof}

\section*{Acknowledgments}

We thank Roger Lee for helpful discussions and Madhur Tulsiani for drawing attention to reference \cite{JUNOD}. We thank ChatGPT 5.2 and Claude Sonnet 4.5 for their help in writing this manuscript.

\phantomsection
\addcontentsline{toc}{section}{Bibliography}
\printbibliography

\end{document}